\let\theoremstyle\relax
\DeclareMathOperator{\sign}{sign}
\theoremstyle{definition}
\newtheorem{definition}{Definition}
\newtheorem{theorem}{Theorem}
\newtheorem{lemma}{Lemma}
\newtheorem{assumption}{Assumption}
\newtheorem{problem}{Problem}
\newtheorem{remark}{Remark}
\newtheorem{property}{Property}
\newcommand{\interior}[1]{%
  {\kern0pt#1}^{\mathrm{o}}%
}
\newcommand{\boundary}[1] {
\partial#1
}
\begin{document}

\title{\LARGE\bf Optimal Control Barrier Functions: Maximizing the Action Space Subject to Control Bounds}
\author{Logan E. Beaver, \IEEEmembership{Member, IEEE}
\thanks{L.E. Beaver is with the Department of Mechanical \& Aerospace Engineering, Old Dominion University, Norfolk, VA 23529 USA (e-mail: lbeaver@odu.edu).}
}

\maketitle


\begin{abstract}
This letter addresses the constraint compatibility problem of control barrier functions (CBFs), which occurs when a safety-critical CBF requires a system to apply more control effort than it is capable of generating.
This inevitably leads to a safety violation, which transitions the system to an unsafe (and possibly dangerous) trajectory.
We resolve the constraint compatibility problem by constructing a control barrier function that maximizes the feasible action space for first and second-order constraints, and we prove that the optimal CBF encodes a dynamical motion primitive.
Furthermore, we show that this dynamical motion primitive contains an implicit model for the future trajectory for time-varying components of the system.
We validate our optimal CBF in simulation, and compare its behavior with a linear CBF.
\end{abstract}


\section{Introduction}

Control Barrier Functions (CBFs) have recently emerged as a mathematically rigorous method to guarantee constraint satisfaction for nonlinear systems with affine dynamics \cite{Ames2019ControlApplications,xiao2023safe}.
CBFs have been used as an additional "safety layer" to track a reference trajectory \cite{Xiao2021BridgingVehicles}, and to directly generate trajectories for reactive systems, e.g., in ecologically-inspired systems \cite{Egerstedt2018RobotAutonomy}.
CBFs have seen a wide range of applications, including connected and autonomous vehicles \cite{Xiao2021BridgingVehicles}, mobile robots \cite{Notomista2019TheRobot}, legged robots \cite{grandia2021multi}, and aerial vehicles \cite{singletary2022onboard}.

As CBFs are deployed on more realistic and complicated systems, new questions have arisen about the existence of feasible control actions that can jointly satisfy all of the constraints on the system.
This is also known as the \textit{constraint compatibility problem}, and it is a common issue in constrained control.
There have been many proposed solutions; imposing `compatibility constraints' to ensure constraints don't conflict \cite{Xu2022FeasibilityFunctions}, validating (or falsifying) whether constraints are compatible before deploying a system \cite{Tan20222CompatibilitySystems}, learning individual control barrier functions to maximize the feasible action space \cite{Xiao2023LearningCBFs}, selectively deactivating constraints \cite{lindemann2018control}, and the use of slack variables to soften constraints that are not safety-critical \cite{notomista2019optimal}.

In contrast, this letter considers a dynamical system with a single safety-critical constraint that is subject to control bounds.
This is the simplest system where the constraint incompatibility arises \cite{Xu2022FeasibilityFunctions}, and it presents an opportunity for us to analyze the fundamental properties of the optimal CBF in detail.
The contributions of this letter are as follows:
\begin{itemize}
    \item We present a constructive proof to derive the optimal CBF (Theorems \ref{thm:first-order} and \ref{thm:second-order}).
    \item We derive a switching system that behaves equivalently to the optimal CBF (Remark \ref{rmk:switching}).
    \item We derive the dynamical motion primitive the system follows when the optimal CBF is active (Definition \ref{def:line-int}).
    \item We show how the optimal CBF implicitly embeds a model of the environment over a significant time horizon (Remark \ref{rmk:embedded}).
\end{itemize}

The remainder of this article is organized as follows.
We briefly address notation in Section \ref{sec:notation} and present preliminary information about CBFs in Section \ref{sec:prelims}.
We address optimal first-order CBFs in Section \ref{sec:first-order}, and we present our main results for second-order systems in Section \ref{sec:second-order}.
Section \ref{sec:simulation} contains simulation results for an adaptive cruise control problem, and Section \ref{sec:conclusion} presents the conclusions and motivates areas of future work.

\subsection{Notation} \label{sec:notation}

Throughout our exposition we use standard notation from multiple domains.
First, we write vectors with bold letters and scalars as unstyled letters, i.e., $\bm{x}$ and $x$, respectively.
We write exogenous variables as explicit functions of time $\delta(t)$, and we omit the dependence of endogenous variables, i.e., states, on time.
We write the spatial derivative of a scalar function $b(\bm{x})$ along a vector field $f(\bm{x})$ using Lie derivative notation,$\frac{\partial f}{\partial \bm{x}} f(\bm{x}) \coloneqq L_{\bm{f}}\, g.$
We examine constraints of the form $b(\bm{x}) \leq 0$ rather than $b(\bm{x}) \geq 0$.
This means that the corresponding class $\mathcal{K}_{\infty}$ function is $\alpha(-b(\bm{x}))$.
Finally, we refer to the barriers in this article as \textit{Zeroing Barrier Functions} (ZBF) in general, and \textit{Control Barrier Functions} (CBFs) when the barrier is a class $\mathcal{K}_{\infty}$ function.

\section{Control Barrier Function Preliminaries} \label{sec:prelims}

We consider a nonlinear dynamical system with state $\bm{x}\in\mathbb{R}^m$, control action $\bm{u}\in\mathbb{R}^n$, and control-affine dynamics,
\begin{equation} \label{eq:dynamics}
    \dot{\bm{x}} = \bm{f}(\bm{x}) + \bm{g}(\bm{x}) \bm{u}.
\end{equation}
We consider the usual assumptions, namely, \eqref{eq:dynamics} is Lipschitz continuous, autonomous, and $\bm{f}, \bm{g}$ are continuous vector fields.
We seek trajectories that satisfy an inequality constraint,
\begin{equation} \label{eq:constraint}
    b(\bm{x}) \leq 0.
\end{equation}
The constraint has order $r\in\mathbb{N}$ if an only if\begin{equation} 
    \begin{cases}
        L_{\bm{g}}^k\,b\,\bm{u} = 0 \quad \text{ if } k < r, \\
        L_{\bm{g}}^k\,b\,\bm{u} \neq 0 \quad \text{ if } k = r,
    \end{cases}
\end{equation}
i.e., the control action $\bm{u}$ first appears after taking $r$ time derivatives of the constraint $b(\bm{x})$.
Finally, we we impose a maximum control effort bound,
\begin{equation} \label{eq:control-bounds}
    \bm{u}\in\mathcal{U} = \Big\{\bm{u}\in\mathbb{R}^m :~ ||\bm{u}|| \leq u_{\max}\Big\}.
\end{equation}

For a first order system, where $\bm{u}$ shows up explicitly in $\dot{b}$, it is common to employ a control barrier function (CBF) with the form \cite{Ames2019ControlApplications},
\begin{equation} \label{eq:cbf-form}
    \dot{b}(\bm{x}) = L_{\bm{f}}b + L_{\bm{g}}b\,\bm{u} \leq -\alpha(b(\bm{x})),
\end{equation}
where  $\alpha(-b(\bm{x}))$ is an extended class $\mathcal{K}_{\infty}$ function.
This is also a zeroing barrier function (ZBF) \cite{Ames2019ControlApplications} because $b(\bm{x}) = 0$ implies $\alpha(b(\bm{x})) = 0$, and thus $\dot{b}(\bm{x}) \leq 0$.
This satisfies the Nagumo theorem \cite{xiao2023safe}, which guarantees that the sub-level set $b(\bm{x})\leq 0$ is forward-invariant.

The challenge we address in this letter is the constraint compatibility problem, i.e., how to design a ZBF that simultaneously satisfies the constraint \eqref{eq:constraint} and control bounds \eqref{eq:control-bounds}.
For example, when $L_{\bm{g}}b > 0$, a scalar system must jointly satisfy,
\begin{equation}
\begin{cases}
    u \leq -\frac{\alpha(b) + L_{\bm{f}}b}{L_{\bm{g}}b}, \\
    u \geq -u_{\max}.
\end{cases}
\end{equation}
Thus, if the system ever enters a state satisfying
\begin{equation}
    -\alpha(b) < -u_{\max} L_{\bm{g}}b + L_{\bm{f}} b,
\end{equation}
then there is no feasible control action $u$ that can jointly satisfy the barrier function \eqref{eq:cbf-form} and control bounds \eqref{eq:control-bounds}.

\begin{assumption} \label{smp:monotonic}
    We only consider the dynamics of the system within the set $b(\bm{x})\leq 0$.
\end{assumption}

Assumption \ref{smp:monotonic} simplifies our analysis by assuming the system initially satisfies $b(\bm{x}) \leq 0$, and it restricts the form of our CBFs to class-$\mathcal{K}_{\infty}$ functions.
This can be relaxed by extending the optimal CBF to an extended class-$\mathcal{K}_{\infty}$ function, which has demonstrated robustness to noise, disturbances, and model uncertainty \cite{emam2022safe}.

\begin{assumption}\label{smp:signs}
    For a constraint $b(\bm{x})$ of order $r$, for every state $\bm{x}\in\mathbb{R}^m$, there exists at least one value of $\bm{u}$ satisfying
    \begin{align*}
        \min_{\bm{u}} \frac{d^r}{dt^r} b(\bm{x}) &\leq -\epsilon^2 < 0, \\
        \max_{\bm{u}} \frac{d^r}{dt^r} b(\bm{x}) &\geq 0,
    \end{align*}
    that also satisfies the control bounds \eqref{eq:control-bounds}.
\end{assumption}

We employ Assumption \ref{smp:signs} to simplify our analysis and ensure the system has the control authority to keep the system in the safe set.
This assumption is straightforward to relax with tools from Lyapunov stability, e.g., LaSalle's Invariance Principle \cite{KhalilBook}, to show forward invariance of $b(\bm{x})$.

\begin{assumption} \label{smp:differentiable}
    If the constraint $b(\bm{x})$ is order $r$, then it is also class $C^r$.
\end{assumption}

Assumption \ref{smp:differentiable} requires the constraint to be smooth enough for the control input $\bm{u}$ to appear after taking $r$ derivatives.
This is necessary because the control input is bounded, so the system trajectory cannot stay on a discontinuous constraint boundary.
This assumption is straightforward to ensure, as the form of the constraint $b(\bm{x})$ is generally chosen by a designer.

\section{First Order Systems}\label{sec:first-order}

We begin with a first order system where the control input $\bm{u}$ shows up explicitly in the first derivative of the barrier function.
We follow this with an analysis of second order systems in Section \ref{sec:second-order}, where the constraint compatibility problem is more significant.

We start by defining the set of \textit{safe states},
\begin{equation}
    \mathcal{C} = \big\{ \bm{x}\in\mathbb{R}^m ~:~ b(\bm{x}) \leq 0 \big\},
\end{equation}
which is compact under the continuity of $b(\bm{x})$ (Assumption \ref{smp:differentiable}).
Next, we seek a ZBF $\beta(\bm{x})$ to guarantee forward invariance of the set $\mathcal{C}$ via,
\begin{equation}
    \dot{b}(\bm{x}) \leq -\beta(\bm{x}).
\end{equation}
We require $\beta$ to satisfy,
\begin{equation} \label{eq:signs}
\sign{\big(-\beta(\bm{x})\big)} = 
    \begin{cases}
        1 &\text{ for all } x\in\mathcal{C}\setminus\partial\mathcal{C},\\
        0 &\text{ for all } x\in\partial\mathcal{C},
    \end{cases}
\end{equation}
where $\partial\mathcal{C}$ is the boundary of $\mathcal{C}$.
This satisfies the Nagumo theorem \cite{xiao2023safe}, which implies forward invariance of $\mathcal{C}$.

To guarantee constraint compatibility, $\beta$ must also satisfy,
\begin{equation}
\begin{aligned}
        -\beta(x) \geq \underset{\bm{u}}{\min}\big(\dot{b}(x)\big), \\
        -\beta(x) \leq \underset{\bm{u}}{\max}\big(\dot{b}(x)\big),
\end{aligned}
\end{equation}
which trivially guarantees constraint compatibility,
\begin{equation}
    \min_{\bm{u}} \big( \dot{b}(\bm{x}) \big) \leq \dot{b}(\bm{x}) \leq -\beta(\bm{x}) \leq \max_{\bm{u}}\big(\dot{b}(\bm{x})\big).
\end{equation}
To maximize the feasible set of control actions $\bm{\bm{u}}$, we select
\begin{equation} \label{eq:fo-betamax}
    -\beta(\bm{x}) = 
    \begin{cases}
        \underset{\bm{u}}{\max} \big( \dot{b}(\bm{x}) \big) \quad &\text{if } \bm{x}\in\interior{\mathcal{C}}, \\
        0 \quad &\text{if } \bm{x}\in\boundary{\mathcal{C}}, \\
    \end{cases}
\end{equation}
which satisfies \eqref{eq:signs} by Assumption \ref{smp:signs}.
For a control-affine system \eqref{eq:dynamics}, the optimal ZBF simplifies to,
\begin{equation} \label{eq:fo-opt-zbf}
    L_{\bm{g}}b\,\bm{u} \leq
        \begin{cases}
        \big|L_{\bm{g}} b| u_{\max} \quad &\text{if } x\in\interior{\mathcal{C}} \\
        -L_{\bm{f}} b &\text{if } x\in\boundary{\mathcal{C}},
    \end{cases}
\end{equation}
which is a linear in $\bm{u}$.

\begin{property} \label{prp:fo-obey}
    The ZBF \eqref{eq:fo-opt-zbf} obeys all state and control constraints.
\end{property}

\begin{proof}
    For all $\bm{x}\in\interior{\mathcal{C}}$, $||\bm{u}|| \leq u_{\max}$ by the definition of the dot product.
    For all $\bm{x}\in\boundary{\mathcal{C}}$, Assumption \ref{smp:signs} implies the existence of a $\bm{u}$ that simultaneously satisfies the ZBF and the control bounds.
\end{proof}

\begin{property} \label{prp:bounds}
    The ZBF \eqref{eq:fo-opt-zbf} maximizes the feasible action space of $\bm{u}$ for all $\bm{x}\in\mathcal{C}$.
\end{property}

Similar to Property \ref{prp:fo-obey}, this is implied by the definition of the dot product and the construction of \eqref{eq:fo-betamax}.

\begin{theorem} \label{thm:first-order}
    The optimal ZBF \eqref{eq:fo-opt-zbf} can be approximated arbitrarily well by jointly imposing a linear CBF and control bounds.
\end{theorem}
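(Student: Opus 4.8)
The plan is to impose a \emph{linear} CBF with a tunable gain $\gamma > 0$, namely $L_{\bm{f}}b + L_{\bm{g}}b\,\bm{u} \leq -\gamma\, b(\bm{x})$, jointly with the control bound $\bm{u}\in\mathcal{U}$, and to show that the feasible action space this pair induces converges to the feasible action space of the optimal ZBF \eqref{eq:fo-opt-zbf} as $\gamma\to\infty$. Since $b(\bm{x})\leq 0$, the gain $\gamma>0$ makes $\alpha(s)=\gamma s$ an admissible linear class-$\mathcal{K}_{\infty}$ function in the sense of \eqref{eq:cbf-form}. Isolating $\bm{u}$ rewrites the linear CBF as $L_{\bm{g}}b\,\bm{u} \leq -L_{\bm{f}}b - \gamma\, b$, which I would compare to \eqref{eq:fo-opt-zbf} separately on $\boundary{\mathcal{C}}$ and on $\interior{\mathcal{C}}$.

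On the boundary $b(\bm{x}) = 0$, so the linear CBF collapses to $L_{\bm{g}}b\,\bm{u} \leq -L_{\bm{f}}b$ for \emph{every} $\gamma$, which is exactly the boundary branch of \eqref{eq:fo-opt-zbf}; the two feasible sets therefore coincide on $\boundary{\mathcal{C}}$ with no approximation error. In the interior $b(\bm{x}) < 0$, so the linear-CBF bound is $-L_{\bm{f}}b + \gamma\,|b|$ and grows without limit in $\gamma$. By Cauchy--Schwarz every $\bm{u}\in\mathcal{U}$ already satisfies the interior branch $L_{\bm{g}}b\,\bm{u} \leq |L_{\bm{g}}b|\,u_{\max}$ of \eqref{eq:fo-opt-zbf}, so the target feasible set is all of $\mathcal{U}$. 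I would then show the linear-CBF feasible set recovers all of $\mathcal{U}$ once $\gamma\,|b| \geq |L_{\bm{g}}b|\,u_{\max} + L_{\bm{f}}b$, and that it grows monotonically in $\gamma$, yielding pointwise convergence to $\mathcal{U}$ on $\interior{\mathcal{C}}$.

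The main obstacle is that this convergence is \emph{not} uniform up to $\boundary{\mathcal{C}}$: as $b\to 0^-$ the gain required to recover all of $\mathcal{U}$ diverges, leaving a thin shell adjacent to the boundary on which the linear CBF is strictly more conservative than the optimal ZBF. To close the argument I would bound this shell. Because $\mathcal{C}$ is compact (continuity of $b$, Assumption \ref{smp:differentiable}) and $L_{\bm{f}}b,\,L_{\bm{g}}b$ are continuous, the numerator $|L_{\bm{g}}b|\,u_{\max} + L_{\bm{f}}b$ attains a finite maximum $M$ on $\mathcal{C}$. Hence the mismatch set is contained in $\{\bm{x}\in\mathcal{C} : |b(\bm{x})| < M/\gamma\}$, a sublevel shell about $\boundary{\mathcal{C}}$ whose extent vanishes as $\gamma\to\infty$. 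Choosing $\gamma$ large therefore confines any discrepancy between the two feasible action spaces to an arbitrarily thin neighborhood of the boundary, which establishes the claimed arbitrarily-good approximation.
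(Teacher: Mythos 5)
Your proposal is correct and follows essentially the same route as the paper's own proof: impose a linear CBF $\alpha(b)=\gamma\,b$ with a large gain, note exact agreement with \eqref{eq:fo-opt-zbf} on $\boundary{\mathcal{C}}$, and show the only discrepancy in the interior is confined to an $O(1/\gamma)$ shell adjacent to the boundary. Your version is in fact somewhat more careful than the paper's (which fixes the gain pointwise at a single state and is loose with signs), since you use compactness of $\mathcal{C}$ to obtain the uniform bound $M$ that makes the shrinking-shell argument rigorous.
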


\begin{proof}
    Consider the control barrier function $\alpha\big(b(\bm{x})\big) = c_1 b(\bm{x})$ for some constant $c_1 > 0$.
    For all points $\bm{x}\in\boundary{\mathcal{C}}$, both functions imply $\dot{b}\big(\bm{x}\big) \leq 0$.
    Let $\bm{x}\in\interior{\mathcal{C}}$ such that $b(\bm{x}) \leq \epsilon$ for some small $\epsilon > 0$.
    Next, let
    \begin{equation}
        c_1 = \frac{1}{\epsilon} \max_{\bm{u}}\big\{\dot{b}(\bm{x})\big\}.
    \end{equation}
    This implies that
    \begin{equation}
        c_1 b(\bm{x}) \geq \max_u\big\{\dot{b}(\bm{x})\big\}\geq\dot{b}(\bm{x})
    \end{equation}
    for all $b(\bm{x}) \geq \epsilon$.
    Thus, the feasible space of joint linear CBF and control bound constraints can be brought $\epsilon$-close to the optimal ZBF.
\end{proof}

Theorem \ref{thm:first-order} requires the slope of a linear CBF to be arbitrarily steep to ensure $\epsilon$-closeness.
However, for any real system, the effect of any noise and extenral disturbances will quickly dominate the $\epsilon$-optimality gap for a linear CBF with a finite slope.
Thus, a linear CBF will effectively approximates the optimal ZBF in practical applications.

\section{Second Order ZBFs}\label{sec:second-order}

For higher-order systems, we define a sequence of nested safe sets \cite{Xiao2019ControlDegree}.
The largest safe set is
\begin{equation}
    \mathcal{C}_1 = \{ \bm{x} \in\mathbb{R}^m ~:~ b(\bm{x}) \leq 0\},
\end{equation}
which is identical to the first-order safe set in Section \ref{sec:first-order}.

To generate the higher-order safe set $\mathcal{C}_2$, we take advantage of the following lemma.
\begin{lemma} \label{lma:boundary}
    For any $\bm{x}\in\partial\mathcal{C}_1$, the set $\mathcal{C}_1$ is invariant if and only if $\dot{b}(\bm{x}) = 0$.
\end{lemma}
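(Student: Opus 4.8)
The plan is to prove both directions by combining the Nagumo boundary condition with the fact that, for a second-order constraint, the control input enters only in $\ddot{b}$ and therefore cannot instantaneously influence $\dot{b}$. As a preliminary, I would record that every $\bm{x}\in\partial\mathcal{C}_1$ satisfies $b(\bm{x}) = 0$ by definition of $\mathcal{C}_1$, and that $\dot{b}$ is continuous along trajectories because Assumption \ref{smp:differentiable} makes $b$ class $C^2$.

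For the \emph{only if} direction, I would argue that any trajectory remaining in $\mathcal{C}_1$ and reaching $\bm{x}\in\partial\mathcal{C}_1$ must approach it from the interior, where $b < 0$. By continuity, $b$ increases up to $0$ as the contact point is reached, so $\dot{b}(\bm{x}) \geq 0$. Invariance (Nagumo's condition \cite{xiao2023safe}) forbids $b$ from becoming positive, which forces $\dot{b}(\bm{x}) \leq 0$. Combining these two inequalities yields $\dot{b}(\bm{x}) = 0$.

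For the \emph{if} direction, I would suppose $\dot{b}(\bm{x}) = 0$ at a boundary contact point and show the set can be kept invariant. Since the constraint is second order, the control acts on $\ddot{b}$, and Assumption \ref{smp:signs} guarantees a control input with $\ddot{b} \leq -\epsilon^2 < 0$. A second-order Taylor expansion along the trajectory, $b(\bm{x}(t+\tau)) = \tfrac{1}{2}\ddot{b}\,\tau^2 + o(\tau^2) < 0$ for small $\tau > 0$ (the $b$ and $\dot{b}$ terms vanishing by hypothesis), shows the trajectory immediately re-enters the interior, so $\mathcal{C}_1$ is forward invariant.

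The main obstacle I anticipate is making precise the directionality of the boundary contact: a naive Nagumo argument gives only $\dot{b} \leq 0$, and the strengthening to $\dot{b} = 0$ relies essentially on the observation that a trajectory staying inside must reach the boundary from below, so $\dot{b}\geq 0$ there. This in turn uses that control cannot reverse $\dot{b}$ instantaneously in a second-order system. Care is therefore needed to phrase invariance as controlled invariance and to rule out the $\dot{b} < 0$ case as unreachable from the interior at a genuine contact point.
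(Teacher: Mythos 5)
Your proof is correct and follows essentially the same line the paper itself gestures at: the paper gives no formal proof, only the remark that the lemma is ``essentially a restatement of the Nagumo theorem'' combined with the tangency (no-infinite-impulse) condition, and your two directions are precisely the formalization of that remark — Nagumo yielding $\dot{b}(\bm{x})\leq 0$, and the relative-degree-two, approach-from-the-interior argument yielding $\dot{b}(\bm{x})\geq 0$. The only minor looseness is the phrase ``$b$ increases up to $0$'': $b$ need not be monotone on the way to the contact time $t^*$, but the one-sided difference quotients $\frac{b(t^*)-b(t)}{t^*-t}=\frac{-b(t)}{t^*-t}\geq 0$ for $t<t^*$ give $\dot{b}(\bm{x})\geq 0$ all the same, so the argument stands.
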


Lemma \ref{lma:boundary} is essentially a restatement of the Nagumo theorem.
This is equivalent to the tangency conditions in optimal control \cite{Bryson1975AppliedControl}, which arise when the system cannot apply infinite impulse to make instantaneous jumps in the state variables.

Next, we seek to define the set of all states $\bm{x}$ that guarantee recursive feasibility, i.e., states where there always exists a control input such that $\mathcal{C}_1$ is forward invariant.
We facilitate this with the definition of the \textit{shortest line integral}.

\begin{definition} \label{def:line-int}
The \textit{shortest line integral} starting at state $\bm{x}$ and ending at $\bm{x}_f$ satisfies,
\begin{equation}
    \min_{\bm{u}\in\mathcal{U}}\Big\{\int_{b(\bm{x})}^{b(\bm{x}_f)} \ddot{b}(\bm{x})\,db \Big\}= \int_{\dot{b}(\bm{x})}^{\dot{b}(\bm{x}_f)} \dot{b}(\bm{x})\,d\dot{b},
\end{equation}
which comes from the kinematic relationship between $b(\bm{x}), \dot{b}(\bm{x})$, and $\ddot{b}(\bm{x})$ \cite{dynamics}.
\end{definition}

Next, we prove that the dynamical motion primitive,
\begin{equation} \label{eq:dmp}
    \min_{\bm{u}} \Big\{ \ddot{b}(\bm{x}) \Big\},
\end{equation}
describes the system's trajectory on the safe set's boundary.

\begin{lemma} \label{lma:bangbang}
    The function $\ddot{b}$ that generates the shortest line integral (Definition \ref{def:line-int}) is the instantaneous maximum or minimum of $\ddot{b}$.
\end{lemma}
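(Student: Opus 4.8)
The plan is to treat the shortest line integral as the value of a one-dimensional optimal control problem in the phase plane $(b,\dot b)$ and to show that its minimizer is a bang-bang policy. First I would use the control-affine dynamics \eqref{eq:dynamics} together with the order-two assumption to write the only controlled derivative explicitly, $\ddot b = L_{\bm f}^2 b + L_{\bm g}L_{\bm f} b\,\bm u$, which is affine in $\bm u$. Since the admissible set $\mathcal U$ in \eqref{eq:control-bounds} is the ball $||\bm u|| \le u_{\max}$, the extrema of this affine map over $\mathcal U$ are attained on its boundary, giving the instantaneous bounds $\min_{\bm u}\ddot b = L_{\bm f}^2 b - ||L_{\bm g}L_{\bm f} b||\,u_{\max}$ and $\max_{\bm u}\ddot b = L_{\bm f}^2 b + ||L_{\bm g}L_{\bm f} b||\,u_{\max}$, each realized by the aligned control $\bm u = \mp u_{\max}\,(L_{\bm g}L_{\bm f} b)^{\top}/||L_{\bm g}L_{\bm f} b||$.

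Next I would rewrite the objective using the kinematic identity behind Definition \ref{def:line-int}. Because $\ddot b\,db = \dot b\,d\dot b$ along any trajectory, the displacement in $b$ required to drive $\dot b$ to its target value is $\Delta b = \int \frac{\dot b}{\ddot b}\,d\dot b$, so the shortest line integral is exactly the minimal such $b$-excursion, now parametrized by $\dot b$ with an integrand that depends on the control only through $\ddot b$. For a fixed value of $\dot b$ the integrand $\dot b/\ddot b$ is strictly monotone in $\ddot b$; hence, holding $\dot b$ fixed, the contribution to the excursion is made as small as possible precisely by pushing $\ddot b$ to the appropriate endpoint of its feasible interval $[\min_{\bm u}\ddot b,\ \max_{\bm u}\ddot b]$. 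Combining this pointwise argument with the affine characterization above identifies the minimizing $\ddot b$ with the instantaneous extreme of $\ddot b$, which is the claim.

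The step I expect to be the main obstacle is justifying that the optimization truly decouples pointwise. The feasible range $[\min_{\bm u}\ddot b,\ \max_{\bm u}\ddot b]$ depends on the full state $\bm x$, so altering the control at one value of $\dot b$ changes the trajectory through state space and therefore the admissible accelerations downstream, and the naive pointwise comparison is not immediately valid. I would make this rigorous with Pontryagin's minimum principle: form the Hamiltonian for minimizing $\Delta b$ subject to $\ddot b = L_{\bm f}^2 b + L_{\bm g}L_{\bm f} b\,\bm u$, observe that it is affine in $\bm u$, and conclude that the optimal control lies on the boundary of $\mathcal U$ wherever the switching function is nonzero. The remaining gap is to exclude singular arcs on which the switching function vanishes identically; here I would invoke Assumption \ref{smp:signs}, which forces $\min_{\bm u}\ddot b \le -\epsilon^2 < 0$ while $\max_{\bm u}\ddot b \ge 0$ and hence $||L_{\bm g}L_{\bm f} b|| u_{\max} > 0$, so $L_{\bm g}L_{\bm f} b$ does not vanish, the switching function has isolated zeros, and the minimizing $\ddot b$ is extremal almost everywhere.
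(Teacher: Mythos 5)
Your proposal reaches the correct conclusion, but by a genuinely different route from the paper. The paper parametrizes the line integral by time, writes it as $\int_{t_1}^{t_2}\ddot b\,|\dot b|\,dt$, forms the Euler--Poisson (second-order Euler--Lagrange) equation for this Lagrangian, and observes that on each interval of constant $\sign(\dot b)$ the stationarity condition collapses to $0=0$; from the absence of any interior extremal it concludes the minimizer must lie on the boundary of the admissible set, i.e., be bang-bang. You instead (i) expose the affine dependence $\ddot b = L_{\bm f}^2 b + L_{\bm g}L_{\bm f}b\,\bm u$ so that the instantaneous extrema over the ball $\mathcal U$ are attained by aligned boundary controls, (ii) reparametrize by $\dot b$ to obtain a minimum-$b$-excursion problem whose integrand $\dot b/\ddot b$ is pointwise monotone in $\ddot b$, and (iii) invoke Pontryagin's principle to justify the pointwise decoupling. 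Your route makes explicit where the control bound and Assumption \ref{smp:signs} enter, and your excursion reformulation is arguably the more faithful reading of Definition \ref{def:line-int}: between fixed endpoints in the $(b,\dot b)$ plane the integral $\int\ddot b\,db=\tfrac12\Delta(\dot b^2)$ is path-independent, so the quantity that genuinely depends on the control is the $b$-distance consumed, which is what your formulation minimizes. The paper's route buys brevity; its degenerate Euler--Poisson equation is the calculus-of-variations shadow of your totally singular Hamiltonian, and both arguments ultimately rest on the same observation that first-order interior stationarity carries no information, pushing the optimum to the control boundary.

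Two steps in your write-up need tightening before the argument is airtight. First, the integrand $\dot b/\ddot b$ has a pole and changes sign at $\ddot b=0$, which by Assumption \ref{smp:signs} lies \emph{inside} the feasible interval $[\min_{\bm u}\ddot b,\,\max_{\bm u}\ddot b]$; the monotonicity argument is only valid on the braking branch $\ddot b<0$, so you must restrict to that branch before "pushing to the endpoint," whereupon the minimizer is $\underline{\ddot b}$. Second, ruling out singular arcs requires more than $L_{\bm g}L_{\bm f}b\neq 0$: the switching function is $\lambda_w\,L_{\bm g}L_{\bm f}b$, where $\lambda_w$ is the costate conjugate to $\dot b$, and $L_{\bm g}L_{\bm f}b\neq 0$ says nothing about whether $\lambda_w$ vanishes on an interval. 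The standard repair is to note that $\lambda_w\equiv 0$ on an interval forces $\lambda_b\equiv 0$ there via $\dot\lambda_w=-\lambda_b$, and then the adjoint system propagates the zero costate to the terminal time, contradicting the transversality condition associated with the terminal cost $b(T)$. With those two repairs your proof is complete.
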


\begin{proof}
    We seek the function $\ddot{b}$ that minimizes,
    \begin{equation} \label{eq:parameterized-line}
        \int_{t_1}^{t_2} \ddot{b}(\bm{x}(t))|\dot{b}(\bm{x}(t))|dt,
    \end{equation}
    which is equivalent to the line integral in Definition \ref{def:line-int} for any parameterization of the trajectory that satisfies the boundary conditions \cite{dynamics}.
    This is an optimal control problem, and the unconstrained solution for $\ddot{b}$ satisfies \cite{beaver2024optimal},
    \begin{equation}
        -\frac{d}{dt}\Bigg(\ddot{b}\frac{|\dot{b}|}{\dot{b}}\Bigg) + \frac{d^2}{dt^2}\Big(|\dot{b}|\Big) = 0.
    \end{equation}
    Using the additive property of integrals, we split \eqref{eq:parameterized-line} into intervals where $\dot{b}$ has a positive or negative sign.
    In each of these cases, the optimality condition reduces to $0 = 0$.
    This implies that $\ddot{b}$ never follows the unconstrained trajectory, and the behavior of $\ddot{b}$ must be bang-bang.
\end{proof}

\begin{remark} \label{rmk:embedded}
    Definition \ref{def:line-int} and Lemma \ref{lma:bangbang} define a motion primitive $\min_{\bm{u}} \ddot{b}(\bm{x})$ over the horizon $b(\bm{x}) \in [b_1, 0]$.
    This horizon can be large, and if $b(\bm{x}, t)$ is time-varying, the motion primitive must also contain a model for the time-varying component $\frac{\partial^2 b}{\partial t^2}$.
    Thus, while CBFs are generally considered myopic, they actually contain an implicit model for i) the future trajectory of endogenous variables, and ii) the evolution of exogenous variables.
\end{remark}

\begin{lemma} \label{lma:c2-boundary}
    The boundary of the safe set $\partial\mathcal{C}_2$ is,
    \begin{align*}
        \partial\mathcal{C}_2 = \Big\{ \bm{x}\in R^m ~:~ \dot{b}(\bm{x}) \geq 0, \int_{b(\bm{x})}^{\bm{0}} \underline{\ddot{b}}\,db +  \frac{1}{2}\dot{b}(\bm{x})^2 = 0,
        \Big\} \\
        \bigcup\Big\{ \bm{x}\in R^m ~:~ \dot{b}(\bm{x}) < 0, \int_{b(\bm{x})}^{\bm{0}} \underline{\ddot{b}}\, db -  \frac{1}{2}\dot{b}(\bm{x})^2 = 0,
        \Big\},
    \end{align*}
    where $\underline{\ddot{b}}$ is the lower bound of $\ddot{b}$ (Lemma \ref{lma:bangbang}).
\end{lemma}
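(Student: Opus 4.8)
The plan is to characterize $\partial\mathcal{C}_2$ as the locus of states from which the maximally-braking motion primitive $\underline{\ddot{b}} = \min_{\bm{u}}\ddot{b}$ drives the trajectory exactly onto the tangency point of $\partial\mathcal{C}_1$, and then to translate that geometric condition into the two integral equations via the kinematic relation of Definition \ref{def:line-int}. I would work in the phase plane $(b, \dot{b})$, in which $\mathcal{C}_1$ is the half-plane $b \leq 0$ and, by Lemma \ref{lma:boundary}, the only point of $\partial\mathcal{C}_1$ at which the trajectory can remain invariant is the tangency point $(b,\dot b)=(0,0)$.

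First I would settle recursive feasibility. A state $\bm{x}$ lies in $\mathcal{C}_2$ if and only if some admissible control keeps $b\le 0$ for all future time while steering the trajectory toward $(0,0)$. Since $\ddot{b}$ ranges monotonically over the feasible interval, the worst-case state to recover is the one requiring the extremal correcting input; Lemma \ref{lma:bangbang} guarantees this input is bang-bang and equals $\underline{\ddot{b}}$. Hence $\bm{x}\in\partial\mathcal{C}_2$ precisely when the primitive $\underline{\ddot{b}}$ produces a trajectory that just grazes $(0,0)$: any state strictly inside is recovered with margin, while any state strictly outside overshoots $b=0$.

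Next I would convert this grazing condition into the integral equations. Along the braking trajectory I use the identity $\ddot{b}\,db = \dot{b}\,d\dot{b}$ from Definition \ref{def:line-int}, integrating from the current state to the endpoint $(0,0)$. The right-hand side evaluates to $\tfrac12\dot{b}(\bm{x}_f)^2 - \tfrac12\dot{b}(\bm{x})^2 = -\tfrac12\dot{b}(\bm{x})^2$, which gives $\int_{b(\bm{x})}^{0}\underline{\ddot{b}}\,db + \tfrac12\dot{b}(\bm{x})^2 = 0$ in the approach phase $\dot{b}\ge 0$. For $\dot{b} < 0$ the orientation of the line integral reverses, exactly the $|\dot{b}|$ factor that appeared in Lemma \ref{lma:bangbang}, so the energy term enters with the opposite sign and yields the second equation. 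Taking the union of the two phases reconstructs the stated form of $\partial\mathcal{C}_2$.

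The hard part will be two points. First, the ``only if'' direction: showing that \emph{no} admissible control other than the extremal primitive can recover a state on the far side of the critical curve. I expect this to reduce to a comparison argument, since replacing $\ddot{b}$ by any value above $\underline{\ddot{b}}$ can only increase the reachable peak of $b$. Second, I must handle the sign of $\dot{b}$ carefully when the trajectory is non-monotone in $b$, splitting the line integral over subintervals of constant $\sign(\dot{b})$ as in Lemma \ref{lma:bangbang}. This second point is the origin of the $\pm\tfrac12\dot{b}^2$ dichotomy, and it is the step I would verify most carefully.
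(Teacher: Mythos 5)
Your proposal is correct and follows essentially the same route as the paper: it identifies $\partial\mathcal{C}_2$ as the locus of states from which the extremal primitive $\underline{\ddot{b}}$ drives the trajectory to graze the tangency point $(b,\dot{b})=(0,0)$ of Lemma \ref{lma:boundary}, and converts that condition into the two integral equations via the kinematic identity of Definition \ref{def:line-int}, with the same sign-of-$\dot{b}$ case split. The paper's own proof is in fact terser---it asserts that evaluating the shortest line integral to the set boundary is necessary and sufficient by Lemma \ref{lma:boundary}, then adds a short open-ball argument that points off the level set cannot be boundary points---so your comparison argument for the ``only if'' direction and your inside-with-margin versus outside-overshoots observation supply the same content in slightly more explicit form.
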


\begin{proof}
    Lemma \ref{lma:c2-boundary} evaluates the shortest line integral (Definition \ref{def:line-int}) from every $\bm{x}\in\mathbb{R}^m$ to the set boundary, which is necessary and sufficient for safety by Lemma \ref{lma:boundary}.
    
    For any other point $\bm{x}'$, let $\delta$ be the euclidean distance between $\bm{x}'$ and the nearest point $\bm{x}$ satisfying Lemma \ref{lma:c2-boundary}.
    No $\bm{y}\in\mathcal{B}_{\delta/2}(\bm{X}')$ satisfies Lemma \ref{lma:c2-boundary}, where $\mathcal{B}_r(\bm{x})$ is a ball of radius $r$ centered at $\bm{x}$.
    Thus, $\mathcal{B}_{\delta/2}(\bm{x}')$ is an open set (of safe or unsafe states), and $\bm{x}'\not\in\partial\mathcal{C}_2$ by definition.
\end{proof}

Note that the definition of $\partial\mathcal{C}_2$ contains two cases, one for each sign of $\dot{b}$.
This is because when $\dot{b}(\bm{x}) < 0$, the limits of integration change direction in Definition \ref{def:line-int}.
Intuitively, $\dot{b}(\bm{x}) < 0$ implies that $b(\bm{x})$ is decreasing--thus, by the continuty of $b(\bm{x})$ (Assumption \ref{smp:differentiable}), there exists an interval of time $\Delta t$ such that $b(\bm{x}) \leq 0$ for any arbitrarily large control input $\bm{u}$.

\begin{lemma}\label{lma:c2}
    The safe set $\mathcal{C}_2$ is
        \begin{align*}
        \mathcal{C}_2 =& \Big\{ \bm{x}\in R^m ~:~ \dot{b}(\bm{x}) \geq 0, \int_{b(x)}^{\bm{0}} \underline{\ddot{b}}\,db +  \frac{1}{2}\dot{b}(\bm{x})^2 \leq 0,
        \Big\} \\
        &\bigcup\Big\{ \bm{x}\in \mathbb{R}^m ~:~ \dot{b}(\bm{x}) < 0, \bm{x}\in {C}_1
        \Big\}.
    \end{align*}
\end{lemma}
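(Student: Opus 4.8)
The plan is to identify $\mathcal{C}_2$ with the set of recursively feasible states and then reduce the invariance question to a one-dimensional braking problem in the $(b,\dot b)$ coordinates. By Lemma \ref{lma:boundary}, keeping $\mathcal{C}_1$ forward invariant is equivalent to never reaching $b(\bm{x})=0$ with $\dot b(\bm{x})>0$; hence $\bm{x}\in\mathcal{C}_2$ if and only if some admissible control can bring the system to $\dot b=0$ at or before $b=0$. By Lemma \ref{lma:bangbang} the trajectory that travels the shortest line integral toward the boundary is the bang-bang arc generated by the minimal acceleration $\underline{\ddot{b}}$, so the extremal (hardest) braking uses $\ddot{b}=\underline{\ddot{b}}$. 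I would therefore introduce the continuous functional
\begin{equation}
\Phi(\bm{x}) \coloneqq \int_{b(\bm{x})}^{0} \underline{\ddot{b}}\,db + \tfrac{1}{2}\dot b(\bm{x})^2,
\end{equation}
so that the whole lemma collapses to showing that, on $\dot b\ge 0$, feasibility is equivalent to $\Phi(\bm{x})\le 0$.

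The key computation is the kinematic identity of Definition \ref{def:line-int} applied with $\ddot{b}=\underline{\ddot{b}}$ and terminal constraint $b(\bm{x}_f)=0$, which gives $\int_{b(\bm{x})}^{0}\underline{\ddot{b}}\,db = \tfrac{1}{2}\dot b(\bm{x}_f)^2 - \tfrac{1}{2}\dot b(\bm{x})^2$, hence $\Phi(\bm{x}) = \tfrac{1}{2}\dot b(\bm{x}_f)^2$. Thus $\Phi(\bm{x})$ is exactly half the squared arrival velocity at the boundary under maximum braking. On the branch $\dot b(\bm{x})\ge 0$ this yields both directions at once: $\Phi(\bm{x})>0$ forces $\dot b(\bm{x}_f)>0$, so even the hardest admissible braking crosses $b=0$ with positive rate and $\bm{x}$ is infeasible; $\Phi(\bm{x})=0$ is the tangency case recovering the first branch of $\partial\mathcal{C}_2$ in Lemma \ref{lma:c2-boundary}; and $\Phi(\bm{x})<0$ means the system comes to rest strictly inside $\mathcal{C}_1$, so $\bm{x}$ is feasible. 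This gives the stated branch $\{\dot b(\bm{x})\ge 0,\ \Phi(\bm{x})\le 0\}$.

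For the branch $\dot b(\bm{x})<0$ the constraint value is instantaneously decreasing, so by the continuity of $b$ (Assumption \ref{smp:differentiable}) there is a nondegenerate time interval on which $b$ keeps falling for \emph{any} admissible $\bm{u}$; the system retreats from the boundary and can defer braking until $\dot b$ returns toward zero, at which point Assumption \ref{smp:signs} guarantees the control authority to reenter the $\Phi\le 0$ region. Hence every $\bm{x}\in\mathcal{C}_1$ with $\dot b(\bm{x})<0$ is feasible, giving $\{\dot b(\bm{x})<0,\ \bm{x}\in\mathcal{C}_1\}$. I would then glue the two branches along $\dot b(\bm{x})=0$: there $\Phi(\bm{x})=\int_{b(\bm{x})}^{0}\underline{\ddot{b}}\,db$, and since $\underline{\ddot{b}}<0$ (Assumption \ref{smp:signs}) one checks that $\Phi(\bm{x})\le 0 \iff b(\bm{x})\le 0$, so the two descriptions agree on the seam and $\mathcal{C}_2$ is exactly the closed region bounded by $\partial\mathcal{C}_2$.

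The step I expect to be the main obstacle is the infeasibility (``only if'') direction on $\dot b\ge 0$: certifying that $\Phi(\bm{x})>0$ rules out \emph{every} admissible control, not merely the constant bound $\underline{\ddot{b}}$. This requires using Lemma \ref{lma:bangbang} as a genuine extremality statement, namely that no feasible profile of $\ddot{b}$ produces a shorter line integral than the bang-bang braking arc, together with a monotone comparison showing that any trajectory with $\ddot{b} > \underline{\ddot{b}}$ on a positive-measure set has strictly larger arrival velocity at $b=0$ and therefore still crosses. A secondary care point is verifying that $\Phi$ genuinely changes sign across $\partial\mathcal{C}_2$, so that $\mathcal{C}_2$ is the safe side rather than its complement; this follows from the continuity and strict monotonicity of $\Phi$ along the worst-case flow, but should be stated explicitly.
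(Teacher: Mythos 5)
Your proposal is correct and follows essentially the same route as the paper: both reduce feasibility to the worst-case bang-bang braking arc of Lemma \ref{lma:bangbang}, evaluate the shortest line integral of Definition \ref{def:line-int} to test the sign of $\int_{b(\bm{x})}^{0}\underline{\ddot{b}}\,db+\tfrac{1}{2}\dot{b}(\bm{x})^2$ on the branch $\dot{b}\ge 0$, and dispatch the branch $\dot{b}<0$ by continuity through an intermediate state with $\dot{b}=0$. The only cosmetic difference is that you anchor the terminal point at $b(\bm{x}_f)=0$ and read off the arrival velocity (so the identity $\Phi(\bm{x})=\tfrac{1}{2}\dot{b}(\bm{x}_f)^2$ holds only when the braking arc actually reaches the boundary, i.e.\ when $\Phi\ge 0$), whereas the paper anchors at $\dot{b}(\bm{x}_f)=0$ and reads off the sign of $b(\bm{x}_f)$ via Lemma \ref{lma:boundary}.
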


\begin{proof}
    Let $\bm{x}_f\in\mathcal{C}_1$ such that $\dot{b}(\bm{x}_f) = 0$.
    We use the contrapositive of Lemma \ref{lma:boundary}, which implies that $\bm{x}_f$ is feasible if and only if $ b(\bm{x}_f) \leq 0$.

    First, consider some $\bm{x}\in\mathcal{C}_2$ such that  $\dot{b}(\bm{x}) \geq 0$.
    Evaluating the shortest line integral (Definition \ref{def:line-int}) yields,
    \begin{equation}
        \int_{b(\bm{x})}^{b(\bm{x}_f)} \underline{\dot{b}}\,db + \frac{1}{2}\dot{b}(\bm{x})^2 = 0.
    \end{equation}
    This implies,
    \begin{equation}
        \int_{b(\bm{x})}^{0} \underline{\dot{b}}\,db + \frac{1}{2}\dot{b}(\bm{x})^2 = \int_{b(\bm{x}_f)}^{0} \underline{\ddot{b}}\,db \leq 0.
    \end{equation}
    Assumption \ref{smp:signs} implies that the integrand of the right hand side is negative and $b$ is increasing from $b(\bm{x}_f)\leq 0$ to $b = 0$.
    Thus, the condition in Lemma \ref{lma:c2} is necessary for safety.

    Next, consider a point $\bm{x}\not\in\mathcal{C}_2$ such that $\dot{b}(\bm{x}) \geq 0$.
    Similar to the previous case, applying Definition \ref{def:line-int} yields
    \begin{equation}
        \int_{b(\bm{x})}^{0} \underline{\dot{b}}\,db + \frac{1}{2}\dot{b}(\bm{x}) = \int_{b(\bm{x}_f)}^{0} \underline{\ddot{b}}\,db > 0.
    \end{equation}
    Under our premise, the integrand of the right hand side is negative and $b$ is decreasing from $b(x_f) > 0$ to $b = 0$ under Assumption \ref{smp:signs}.
    Thus, the premise of Lemma \ref{lma:c2} is sufficient for safety by contraposition.

    Finally, consider the case where $\bm{x}\in\mathcal{C}_1$ such that $\dot{b}(\bm{x}) < 0$.
    If $\dot{b}$ remains non-positive, then $b\leq \bm{b}(x) \leq 0$ and the trajectory remains in $\mathcal{C}_1$.
    If $\dot{b}$ becomes positive, then by the continuity in $\dot{b}$ (Assumption \ref{smp:differentiable}) it must pass through some intermediate point $\dot{b}(\bm{x}') = 0$.
    Setting $\bm{x}_f = \bm{x}'$ satisfies the premise of the previous case and completes the proof.
\end{proof}

\begin{lemma} \label{lma:subset}
    The set $\mathcal{C}_2$ is a strict subset of $\mathcal{C}_1$.
\end{lemma}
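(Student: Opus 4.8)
The plan is to establish the two inclusions separately: first that $\mathcal{C}_2\subseteq\mathcal{C}_1$, and then that this containment is strict by exhibiting an explicit witness point in $\mathcal{C}_1\setminus\mathcal{C}_2$. I would work entirely from the characterization of $\mathcal{C}_2$ given in Lemma \ref{lma:c2}, treating its two defining cases (the sign of $\dot{b}$) in turn.

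For the containment, the case $\dot{b}(\bm{x})<0$ is immediate, since Lemma \ref{lma:c2} already requires $\bm{x}\in\mathcal{C}_1$ explicitly in that branch. The work is in the case $\dot{b}(\bm{x})\geq 0$, where membership in $\mathcal{C}_2$ is governed by $\int_{b(\bm{x})}^{0}\underline{\ddot{b}}\,db+\tfrac{1}{2}\dot{b}(\bm{x})^2\leq 0$. Here I would invoke Assumption \ref{smp:signs}, which guarantees $\underline{\ddot{b}}\leq-\epsilon^2<0$ at every state, so the integrand is strictly negative. I would then split on the sign of $b(\bm{x})$: if $b(\bm{x})>0$, the integral runs ``backward'' from a positive value down to $0$ and is therefore strictly positive, so adding the nonnegative term $\tfrac{1}{2}\dot{b}(\bm{x})^2$ renders the left-hand side strictly positive and violates the defining inequality. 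Hence $b(\bm{x})\leq 0$ is forced, i.e. $\bm{x}\in\mathcal{C}_1$. This sign bookkeeping is the only nontrivial step of the inclusion.

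For strictness, I would exhibit a state on the boundary $\partial\mathcal{C}_1$ with $b(\bm{x})=0$ and $\dot{b}(\bm{x})>0$. Such a point lies in $\mathcal{C}_1$ since $b(\bm{x})=0\leq 0$, yet its $\mathcal{C}_2$-membership condition collapses (the integral vanishes) to $\tfrac{1}{2}\dot{b}(\bm{x})^2\leq 0$, which fails whenever $\dot{b}(\bm{x})>0$. Intuitively this is exactly the outward-moving boundary state that no bounded control can save, since $\bm{u}$ acts only through $\ddot{b}$ and cannot instantaneously reverse a strictly positive $\dot{b}$ (consistent with Lemma \ref{lma:boundary}, which ties invariance at the boundary to $\dot{b}=0$).

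The main obstacle I anticipate is justifying that such a witness state actually exists, rather than being vacuous: the argument needs at least one configuration on $\partial\mathcal{C}_1$ with $\dot{b}>0$. I would argue this from Assumption \ref{smp:signs} together with the smoothness of $b$ (Assumption \ref{smp:differentiable})---since $\max_{\bm{u}}\ddot{b}\geq 0$, the state can be driven toward the boundary with $b$ increasing, producing boundary states with positive $\dot{b}$---and I would note that if no such state existed the constraint would be trivially inactive, so the strict-subset claim is precisely the non-degenerate regime of interest.
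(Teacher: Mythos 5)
Your proposal is correct and follows essentially the same route as the paper's proof: the same case split on the sign of $\dot{b}$, the same use of Assumption \ref{smp:signs} to force $b(\bm{x})\leq 0$ from the sign of the integrand, and the same witness point $b(\bm{x})=0$, $\dot{b}(\bm{x})>0$ for strictness. The only cosmetic differences are that you verify the witness's exclusion directly from the formula in Lemma \ref{lma:c2} rather than citing Lemma \ref{lma:boundary}, and you add a (reasonable, though omitted by the paper) remark on why such a witness state exists.
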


\begin{proof}
    
    For all $x\in\mathbb{R}^m$ such that $\dot{b}(\bm{x}) < 0$, $\bm{x}\in\mathcal{C}_1$ if and only if $\bm{x} \in \mathcal{C}_2$.
    
    Next, let $x\in\mathcal{C}_2$ such that $\dot{b}(\bm{x}) \geq 0$.
    By Lemma \ref{lma:c2},
    \begin{equation} \label{eq:subset-inequality}
        \int_{b(\bm{x})}^{0} \underline{\ddot{b}}(\bm{x})\,db \leq -\frac{1}{2}\dot{b}^2(\bm{x}) \leq 0.
    \end{equation}
    Under Assumption \ref{smp:signs} the integrand of \eqref{eq:subset-inequality} is negative, therefore $b(\bm{x}) \leq 0$ and $\bm{x}\in\mathcal{C}_1$.
    
    To prove strictness of the subset, consider a point $\bm{x}\in\mathcal{C}_1$ such that $b(\bm{x}) = 0$ and $\dot{b}(\bm{x}) > 0$.
    By Lemma \ref{lma:boundary}, $\bm{x}\not\in\mathcal{C}_2$.
\end{proof}

\begin{theorem}\label{thm:second-order}
    The largest set that ensures forward invariance of $b(\bm{x}) \leq 0$ is,
        \begin{align*}
        \mathcal{C}_2 =& \Big\{ \bm{x}\in R^m ~:~ \dot{b}(\bm{x}) \geq 0, \int_{b(x)}^{\bm{0}} \underline{\ddot{b}}\,db +  \frac{1}{2}\dot{b}(\bm{x})^2 \leq 0,
        \Big\} \\
        &\bigcup\Big\{ \bm{x}\in \mathbb{R}^m ~:~ \dot{b}(\bm{x}) < 0, \bm{x}\in {C}_1
        \Big\}.
    \end{align*}
\end{theorem}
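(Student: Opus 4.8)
The plan is to establish two distinct properties of $\mathcal{C}_2$: that it is controlled-invariant (every trajectory starting inside it can be kept within $\mathcal{C}_1$), and that it is \emph{maximal} among all sets with this property. Since Lemma \ref{lma:c2} already characterizes $\mathcal{C}_2$ via the shortest line integral and Lemma \ref{lma:subset} locates it strictly inside $\mathcal{C}_1$, most of the groundwork is complete; the genuinely new content of the theorem is the maximality claim. I would therefore structure the proof as a synthesis of Lemmas \ref{lma:boundary}--\ref{lma:subset}, with the bulk of the argument devoted to ruling out any larger forward-invariant set.

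For the invariance half, I would invoke Lemma \ref{lma:c2} directly. For any $\bm{x}\in\mathcal{C}_2$ with $\dot{b}(\bm{x})\geq 0$, the shortest line integral condition certifies that applying the bang-bang control achieving $\underline{\ddot{b}}$ (Lemma \ref{lma:bangbang}) drives $\dot{b}$ to zero at some state $\bm{x}_f$ with $b(\bm{x}_f)\leq 0$; by the tangency condition of Lemma \ref{lma:boundary}, this state keeps $\mathcal{C}_1$ invariant. For $\dot{b}(\bm{x})<0$, membership in $\mathcal{C}_1$ is immediate since $b$ is instantaneously decreasing, so no violation can occur before $\dot{b}$ returns to zero, at which point the previous case applies.

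For maximality, I would argue by contradiction. Suppose a set $\mathcal{C}'\supsetneq\mathcal{C}_2$ ensures forward invariance of $b(\bm{x})\leq 0$, and select $\bm{x}\in\mathcal{C}'\setminus\mathcal{C}_2$. If $\dot{b}(\bm{x})\geq 0$, then by Lemma \ref{lma:c2} the shortest line integral satisfies $\int_{b(\bm{x})}^{0}\underline{\ddot{b}}\,db+\tfrac{1}{2}\dot{b}(\bm{x})^2>0$. Because $\underline{\ddot{b}}$ realizes the minimal (bang-bang) value of $\ddot{b}$ by Lemma \ref{lma:bangbang}, this integral represents the \emph{best achievable} braking trajectory; hence no admissible control can bring $\dot{b}$ to zero before $b$ exceeds zero, so $b(\bm{x})>0$ becomes unavoidable and $\mathcal{C}_1$ is violated. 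This contradicts the assumed invariance of $\mathcal{C}'$, establishing that no set strictly larger than $\mathcal{C}_2$ can work.

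The main obstacle is tying the optimality of the shortest line integral to the physical claim that $\underline{\ddot{b}}$ is the genuine worst-case braking bound, and in handling the two sign regimes of $\dot{b}$ consistently. The integration limits in Definition \ref{def:line-int} reverse orientation when $\dot{b}<0$, which is precisely why the set description splits into two branches; I would separate these cases at the outset. Once separated, the kinematic identity $\ddot{b}\,db=\dot{b}\,d\dot{b}$ collapses each line integral into the closed-form $\tfrac{1}{2}\dot{b}^2$ term appearing in the set definition, and the comparison between the attainable integral and zero yields the maximality conclusion without further technical difficulty.
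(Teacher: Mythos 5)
Your proposal is correct and follows essentially the same route as the paper: the paper's proof is simply a citation of Lemmas \ref{lma:c2} and \ref{lma:subset}, and the invariance and maximality arguments you spell out (including the contradiction via the optimality of $\underline{\ddot{b}}$ for a point outside $\mathcal{C}_2$) are exactly the content already contained in the necessity/sufficiency directions of the proof of Lemma \ref{lma:c2}. The only minor omission is that in the maximality step you should also note the trivial case $\bm{x}\in\mathcal{C}'\setminus\mathcal{C}_2$ with $\dot{b}(\bm{x})<0$, which forces $b(\bm{x})>0$ immediately.
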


\begin{proof}
    The proof follows from Lemmas \ref{lma:c2} and \ref{lma:subset}.
\end{proof}

Theorem \ref{thm:second-order} describes the shape of the largest safe set that ensures recursive feasibility of $\mathcal{C}_1$.
Next, we present a result that we use to define the optimal CBF, which has the same feasible action space as $\mathcal{C}_2$.

\begin{lemma} \label{lma:class-k}
    The shortest line integral (Definition \ref{def:line-int}) induces a Class $\mathcal{K}_{\infty}$ function.
\end{lemma}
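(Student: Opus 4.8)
The plan is to exhibit the function induced by the shortest line integral explicitly and then verify, one property at a time, the defining characteristics of a class $\mathcal{K}_{\infty}$ function: continuity, vanishing at the origin, strict monotonicity, and radial unboundedness. Working on the branch $\dot b(\bm x) \geq 0$ of $\partial\mathcal{C}_2$ from Lemma \ref{lma:c2-boundary}, I would solve the boundary relation $\int_{b}^{0}\underline{\ddot b}\,db' + \tfrac12 \dot b^2 = 0$ for $\dot b$ and define, for each argument $r = -b(\bm x)\ge 0$,
\[
  \alpha(r) \;=\; \sqrt{\,-2\!\int_{-r}^{0}\underline{\ddot b}\,db'\,}.
\]
This is precisely the value of $\dot b$ attained on the boundary at the state whose constraint value is $b = -r$, so $\mathcal{C}_2$ is recovered as the sublevel set $\{\dot b \le \alpha(-b)\}$; showing $\alpha\in\mathcal{K}_{\infty}$ is therefore exactly the claim.

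The key structural input is Assumption \ref{smp:signs}, which supplies the uniform bound $\underline{\ddot b} \le -\epsilon^2 < 0$ along the bang-bang primitive of Lemma \ref{lma:bangbang}. First I would note $\alpha(0)=0$, since an integral over an interval of zero length vanishes. Continuity of $\alpha$ follows because $r\mapsto \int_{-r}^{0}\underline{\ddot b}\,db'$ is continuous (an integral with an integrable integrand) and the square root is continuous on $[0,\infty)$, the radicand being nonnegative because $\underline{\ddot b}<0$ forces the integral to be negative. For strict monotonicity I would differentiate the radicand, obtaining $\tfrac{d}{dr}\big(-2\int_{-r}^{0}\underline{\ddot b}\,db'\big) = -2\,\underline{\ddot b}(-r) \ge 2\epsilon^2 > 0$, so the radicand is strictly increasing and hence so is its square root. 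Finally, the same bound gives $-2\int_{-r}^{0}\underline{\ddot b}\,db' \ge 2\epsilon^2 r \to \infty$, which establishes radial unboundedness $\alpha(r)\to\infty$.

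The main obstacle is not any single inequality but the well-posedness of treating $\underline{\ddot b}$ as a function of the scalar $b'$ inside the integral. A priori $\underline{\ddot b}$ is a function of the full state $\bm x$, so I must argue that along the boundary trajectory realized by the minimizing control of Definition \ref{def:line-int} the map from $b'$ to the state is single-valued, making $\underline{\ddot b}$ a legitimate integrand in $b'$. On the branch $\dot b \ge 0$ this holds because $\dot b > 0$ strictly until the terminal point $b=0$, where $\dot b = 0$ by Lemma \ref{lma:boundary}, so $b'$ is strictly monotone along the trajectory and the parameterization by $b'$ is one-to-one. With this reduction in hand the four verifications above are immediate, and I would close by noting that the $\dot b < 0$ branch needs no separate treatment, as it collapses to $\mathcal{C}_1$ by Lemma \ref{lma:c2}.
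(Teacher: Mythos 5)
Your proof is correct and follows essentially the same route as the paper: both verify the class $\mathcal{K}_{\infty}$ properties (vanishing at zero, strict monotonicity, radial unboundedness) directly from the uniform bound $\underline{\ddot{b}} \leq -\epsilon^2 < 0$ supplied by Assumption \ref{smp:signs}; the paper does this for the integral $F(-b(\bm{x})) = -\int_{b(\bm{x})}^{0}\underline{\ddot{b}}\,db$ itself, while you wrap it in the square root, which changes nothing since a continuous strictly increasing composition preserves class $\mathcal{K}_{\infty}$. Your version is actually more complete than the paper's, which asserts strict monotonicity without the derivative computation and never addresses the well-posedness of $\underline{\ddot{b}}$ as an integrand in $b$.
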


\begin{proof}
    Consider the function,
    \begin{equation} \label{eq:fbx}
        F\big(-b(\bm{x})\big) =  -\int_{b(\bm{x})}^{0}\underline{\ddot{b}} db.
    \end{equation}
    This satisfies the definition of a class $\mathcal{K}_{\infty}$ function: 1) $F(0) = 0$, as the integral over a zero-measure set is zero; 2) $F(b(\bm{x})$ is strictly increasing, and 3) $F(b(\bm{x}))\to\infty$ as $b(\bm{x}) \to \infty$, since $\underline{\ddot{b}}(\bm{x})$ is upper bounded by a negative constant.
\end{proof}

\begin{theorem} \label{thm:cbf-optimal}
    The control barrier function
    \begin{equation} \label{eq:opt-cbf}
        \alpha\big(b(\bm{x})\big) = \sqrt{-2\,\int_{b(\bm{x})}^{0} \underline{\ddot{b}}\, db}
    \end{equation}
    combined with a control bounds constraint is optimal, i.e., its set of safe actions is equal to $\mathcal{C}_2$.
\end{theorem}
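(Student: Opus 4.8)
The plan is to show that the region of state space carved out by the CBF constraint $\dot b(\bm x) \le \alpha(b(\bm x))$ together with the bound $\bm u\in\mathcal U$ coincides exactly with $\mathcal C_2$, and then invoke the maximality of $\mathcal C_2$ from Theorem \ref{thm:second-order} to upgrade this equality to optimality. First I would record, via Lemma \ref{lma:class-k}, that $\alpha$ in \eqref{eq:opt-cbf} is a genuine class-$\mathcal K_\infty$ barrier: since the integrand $\underline{\ddot b}$ is strictly negative (Assumption \ref{smp:signs}), the radicand $-2\int_{b}^{0}\underline{\ddot b}\,db$ is real and nonnegative precisely when $b(\bm x)\le 0$, it vanishes on $\partial\mathcal C_1$, and it is strictly increasing and unbounded in $-b$. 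This legitimizes treating $\tilde b := \dot b - \alpha(b)$ as a first-order ZBF, so that the Nagumo/forward-invariance argument of Section \ref{sec:first-order} applies.

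Next I would establish the set equality by splitting on the sign of $\dot b$, mirroring the two branches of $\mathcal C_2$. When $\dot b(\bm x)\ge 0$, both sides of $\dot b \le \alpha(b)$ are nonnegative, so squaring is lossless and reversible, yielding $\tfrac12\dot b^2 \le -\int_{b}^{0}\underline{\ddot b}\,db$, i.e. $\int_{b}^{0}\underline{\ddot b}\,db + \tfrac12\dot b^2 \le 0$, which is verbatim the defining inequality of $\mathcal C_2$ on the branch $\dot b\ge 0$. When $\dot b(\bm x)<0$, the constraint $\dot b \le \alpha(b)$ holds automatically wherever $\alpha$ is defined, namely wherever $b(\bm x)\le 0$, i.e. $\bm x\in\mathcal C_1$; this reproduces the second branch of $\mathcal C_2$. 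Hence the CBF-feasible set is exactly $\mathcal C_2$.

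Finally I would tie the construction to the control bound and to optimality. The quantity $\underline{\ddot b}$ entering $\alpha$ is, by Lemma \ref{lma:bangbang} together with Assumption \ref{smp:signs}, the pointwise minimum of $\ddot b$ attainable under $\bm u\in\mathcal U$; consequently on $\partial\mathcal C_2$ the worst-case deceleration it encodes is realized by an admissible control, so \eqref{eq:opt-cbf} and the bound \eqref{eq:control-bounds} are compatible and the tangency condition $\dot b = 0$ on $\partial\mathcal C_1$ (Lemma \ref{lma:boundary}) is met by an admissible $\bm u$. Since the CBF renders exactly $\mathcal C_2$ forward invariant, and Theorem \ref{thm:second-order} shows $\mathcal C_2$ is the largest set guaranteeing forward invariance of $b(\bm x)\le 0$, no admissible controller can remain safe from a strictly larger set; the feasible action set of \eqref{eq:opt-cbf} with \eqref{eq:control-bounds} is therefore maximal, i.e. optimal.

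The crux I expect is the set equality, and specifically the squaring step paired with the domain/sign bookkeeping: I must confirm the radicand is nonnegative exactly on $\mathcal C_1$ so the $\dot b<0$ branch collapses cleanly to the $\mathcal C_1$ condition, and that squaring is reversible because both sides are nonnegative on $\{\dot b\ge 0\}$. The second delicate point is control compatibility, namely justifying that $\underline{\ddot b}$ is not merely an abstract lower bound but one attained by some $\bm u\in\mathcal U$ along the bang-bang primitive of Lemma \ref{lma:bangbang} — this is what makes the \emph{combination} with \eqref{eq:control-bounds}, rather than the CBF in isolation, the correct object whose safe-action set is being maximized.
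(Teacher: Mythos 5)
Your proof takes essentially the same route as the paper's: invoke Lemma \ref{lma:class-k} for the class-$\mathcal{K}_{\infty}$ property, square the CBF inequality to recover the defining inequality of $\mathcal{C}_2$, and dispatch the remaining branch via the control bounds and Theorem \ref{thm:second-order}. If anything, your version is more careful than the paper's own two-line argument, which writes the condition as $\dot{b} \leq -\alpha(b)$ and attributes the integral branch to $\dot{b}\leq 0$ (a sign slip relative to Lemma \ref{lma:c2} and the worked example in Section \ref{sec:simulation}); your explicit split on $\sign(\dot{b})$, the check that the radicand is nonnegative exactly on $\mathcal{C}_1$, and the observation that squaring is reversible only when both sides are nonnegative supply precisely the bookkeeping the paper leaves implicit.
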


\begin{proof}
    By Lemma \ref{lma:class-k}, \eqref{eq:opt-cbf} is a class $\mathcal{K}_{\infty}$ function, and it describes the safe set,
    \begin{equation}
        \mathcal{C}_{\alpha} = \big\{ \bm{x}\in\mathcal\mathbb{R}^m ~:~ \dot{b}(\bm{x}) \leq -\alpha\big(b(\bm{x})\big) \big\}.
    \end{equation}
    Substituting \eqref{eq:fbx} yields the boundary,
    \begin{equation}
        \dot{b}(\bm{x}) \leq - \sqrt{-2\int_{b(\bm{x})}^{0}\underline{\dot{b}}}.
    \end{equation}
    Squaring both sides and re-arranging yields the boundary of $\mathcal{C}_2$, thus \eqref{eq:opt-cbf} describes the optimal barrier function when $\dot{b} \leq 0$.
    Enforcing the control bounds when $\dot{b}(\bm{x}) > 0$ completes the proof.
\end{proof}

Theorem \ref{thm:cbf-optimal} is a constructive proof to generate the control barrier function that exactly captures the system's feasible action space.
Keeping the system in $\mathcal{C}_{\alpha}$ is a first-order constraint, and thus applying Theorem \ref{thm:first-order} yields a control barrier function that is $\epsilon-$close to the optimal ZBF.
Finally, to complete this section, we describe the optimal behavior of a system with second-order constraints under the optimal control barrier function.

\begin{remark} \label{rmk:switching}
    The optimal control policy switches between satisfying the control bounds when $\bm{x}\in\interior{\mathcal{C}}_{\alpha}$ and minimizing $\ddot{b}(\bm{x})$ when $\bm{x}\in\boundary{\mathcal{C}}_{\alpha}$; this is depicted in Fig. \ref{fig:switching-system}.
\end{remark}

\begin{figure}[ht]
    \centering
    \begin{tikzpicture}
    \tikzstyle{block}=[draw,,minimum width=3cm];
    \tikzstyle{arrow}=[->,ultra thick];
    \node[block] (A) at (0, 0) {$||\bm{u}|| \leq u_{\max}$};
    \node[block] (B) at (0, -1.5) {$\bm{u} = \arg\min_{\bm{u}}\{\ddot{b}(\bm{x})\}$};
    \draw[arrow] (A.300) -- node[right]{$\bm{x}\in\boundary{\mathcal{C}}_2$} (B.60);
    \draw[arrow] (B.120) -- node[left]{$\bm{x}\not\in\boundary{\mathcal{C}}_2$} (A.240);
    \end{tikzpicture}
    \caption{Switching description of the optimal behavior; the system operates within the control bounds until it reaches the safe set boundary.}
    \label{fig:switching-system}
\end{figure}
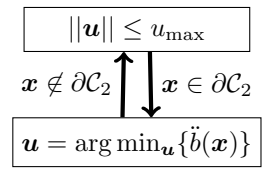

\begin{proof}
We now have the first order constraint,
\begin{equation}
    \dot{b}(\bm{x}) + \alpha(b(\bm{x})) \leq 0.
\end{equation}
Applying Theorem \ref{thm:second-order} implies that the optimal CBF approximates the ZBF,
\begin{equation} \label{eq:cbf-2o}
    -\beta(\bm{x}) = 
    \begin{cases}
        \max_{\bm{u}}\{ \ddot{b}(\bm{x}) + \dot{\alpha}(b(\bm{x})) \} &\text{ if } \bm{x}\in\interior{\mathcal{C}}_2, \\
        0 &\text{ if } \bm{x}\in\boundary{\mathcal{C}}_2.
    \end{cases}
\end{equation}
The ZBF \eqref{eq:cbf-2o} is equivalent to enforcing the control bounds for $\bm{x}\in\interior{\mathcal{C}}_2$ by definition.
For $\bm{x}\in\boundary{\mathcal{C}}_2$, the ZBF keeps the system within $\mathcal{C}_2$.
By construction, this is only be achieved by applying the control input 
\begin{equation}
    \bm{u} = \arg\min_{\bm{u}}\{ \ddot{b}(\bm{x})\}.
\end{equation}
\end{proof}

\section{Simulation Results} \label{sec:simulation}

In this section we consider longitudinal control of a connected autonomous vehicle (CAV) using adaptive cruise control.
Under adaptive cruise control, the CAV attempts to maintain a set speed while maintaining a safe following distance.
At each time-instant the vehicle measures the distance $\delta(t)$ to the vehicle in front of it, which must be kept within an admissible range.

The autonomous vehicle has a state $\bm{x} = [p, v]^{\intercal}\in\mathbb{R}^2$ and control action $u\in\mathbb{R}$.
It follows integrator dynamics,
\begin{equation} \label{eq:example-dynamics}
    \dot{p} = v, \quad \dot{v} = u.
\end{equation}
We impose the acceleration bounds
\begin{equation} \label{eq:example-bounds}
    |u| \leq u_{\max},
\end{equation}
and we constrain the vehicle to obey a headway constraint,
\begin{equation} \label{eq:example-constraint}
   b(\bm{x}) = p - \delta(t) - \gamma \leq 0,
\end{equation}
where $\delta(t)$ is the position of the vehicle in front of the CAV, and $\gamma$ is a minimum separating distance.

Remark \ref{rmk:embedded} requires us to explicitly embed a model of $\delta(t)$ in the optimal CBF.
For vehicle in front, we assume a constant speed model, i.e., $\ddot{\delta}(t) = 0$.
This is a common model used in the literature with reasonable success (e.g., for model predictive control \cite{mahbub2022platoon}), however, in safety-critical scenarios it may be beneficial to replace $\ddot{\delta}(t)$ with a lower bound (e.g., the minimum braking distance of \cite{beaver2021constraint}).
Alternatively, future predictions of $\delta(t)$ can be communicated from the lead vehicle if it is a CAV.

To construct the higher order CBF, we consider the case when $v\geq 0$.
From Lemma \ref{lma:bangbang}, the shortest line integral is,
\begin{equation}
    \int_{b=p-\delta}^{b=0} -u_{\max}\, dp = u_{\max}(p-\delta(t)-\gamma).
\end{equation}
Theorem \ref{thm:cbf-optimal} implies the barrier function
\begin{equation}
    \alpha\big(b(\bm{x})\big) = -\sqrt{-2\,u_{\max}\,(p-\delta(t)-\gamma)},
\end{equation}
and thus,
\begin{equation} \label{eq:ex-v-cbf}
    v - \dot{\delta}(t) \leq \sqrt{-2\,u_{\max}\,(p-\delta(t)-\gamma)}.
\end{equation}
Equivalently, the boundary of the safe set (Theorem \ref{thm:second-order}) yields the minimum stopping distance constraint,
\begin{equation}
    2\,u_{\max}(p-\delta(t)-\gamma) + \big(v-\dot{\delta}(t)\big)^2 \leq 0.
\end{equation}

Following , Remark \ref{rmk:switching} the optimal barrier function to ensure \eqref{eq:ex-v-cbf} is,
\begin{align} \label{eq:example-final-cbf}
    u \leq& -c_1 \Big(v - \dot{\delta}(t) - \sqrt{-2 u_{\max}(p-\delta(t)-\gamma)}\Big) \notag\\
    &- \frac{u_{\max} (v - \dot{\delta}(t))}{\sqrt{-2 u_{\max}(p-\delta(t)-\gamma)}},
\end{align}
for some large constant $c_1$.
Finally, we present a quadratic program to solve the adaptive cruise control problem for the CAV.

\begin{problem}{(Adaptive Cruise Control)} \label{prb:ocp} At each time step $t_k$, apply the control action $u$ that maintains the optimal speed $v^*$ while guaranteeing safety:
    \begin{align*}
        \min_{u(t_k)} & \big(v(t_k) + u(t_k)\Delta t - v^*\big)^2 \\
        \text{subject to:}& \\
        \eqref{eq:example-dynamics}& \quad \text{(dynamics)}, \\
        \eqref{eq:example-bounds}& \quad \text{(control bounds)}, \\
        \eqref{eq:example-final-cbf}& \quad \text{(control barrier function; if $v > \delta(t)$)}.
    \end{align*}
    Note that Theorem \ref{thm:second-order} implies we only need to enforce the CBF when $\dot{b}(\bm{x}) = v > 0$.
\end{problem}

It is critical to realize that while the CBF's safety guarantees are derived for a continuous-time system, we discretize Problem \ref{prb:ocp} for implementation on a digital computer.
In general, we calculate the right hand side of the CBF \eqref{eq:example-final-cbf} using the state at the current time $t_k$, and we apply the constant control input $u(t_k)$ over the interval $\Delta t$. 
The impact of discretizing the system, and managing constraint violations over the interval $\Delta t$, is an area of active research \cite{xiao2022event}.

A simulation of Problem \ref{prb:ocp} is depicted in Fig. \ref{fig:sim-results1} for the optimal CBF and a second-order linear CBF with the form,
\begin{equation} \label{eq:linear-cbf}
    u \leq -c_B\big( v + \dot{\delta}(t) \big) - c_A c_B\big( p + \delta(t) + \gamma \big).
\end{equation}
The corresponding parameters for each simulation are given in Table \ref{tab:parameters}.

\begin{table}[ht]
    \centering
    \begin{tabular}{cccccccccc}
        $\bm{x}^0$ & $v^*$ & $\gamma$ &$\delta$ & $\dot{\delta}$ & $u_{\max}$ & $c_1$ & $c_a$ & $c_b$ \\ \toprule
        $[0,\, 10]^{\intercal}$ & $10$ & $10$ & $1$ & $10$ & $5$ & $3$ & $100$ & $1$
    \end{tabular}
    \caption{Parameters used for the adaptive cruise control simulation.}
    \label{tab:parameters}
\end{table}

Fig. \ref{fig:sim-results1} depicts how the upper bound on the control input $u$ varies as a function of the rear-end safety constraint $b(\bm{x})$.
As expected, the linear CBF is conservative, and causes the system to start braking before it is necessary.
This is not an artefact of parameter selection, rather it comes from the linear CBF \eqref{eq:linear-cbf} under-approximating the square root in the optimal CBF \eqref{eq:example-final-cbf}.
In contrast, the optimal CBF approximates the switching behavior described in Remark \ref{rmk:switching}.
While the conservativeness of the linear CBF may have additional benefits, e.g., passenger comfort, such considerations should be explicitly imposed on the controller in Problem \ref{prb:ocp}--they should not arise as coincidence of selecting a sub-optimal CBF.

\begin{figure}[ht]
    \centering
    \includegraphics[width=0.8\linewidth]{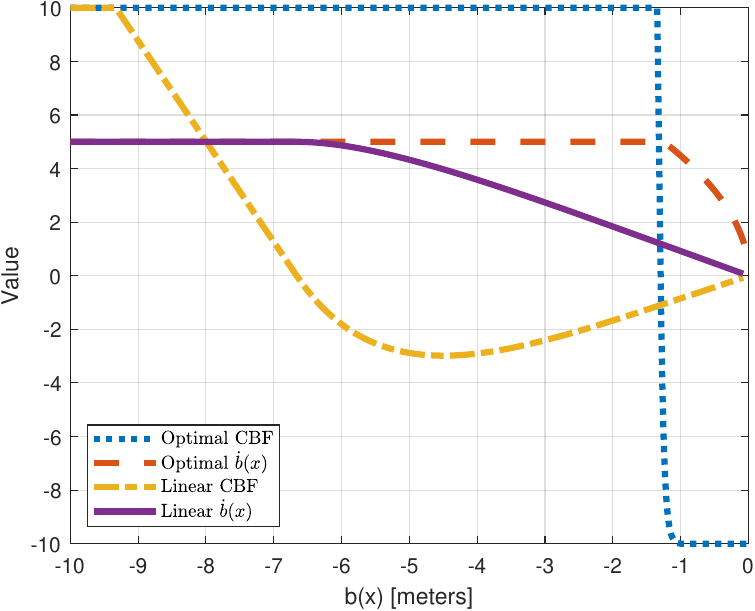}
    \caption{Behavior of the optimal CBF (blue and orange) and the conservative linear CBF (purple and yellow) as the system approaches the boundary of the safe set at $b(\bm{x}) =0$.}
    \label{fig:sim-results1}
\end{figure}

Both the optimal and linear CBFs approach a steady separating distance equal to $\gamma$, and in each case the relative speed between the vehicles is zero.
This is necessary and sufficient for forward invariance of the safe set (Lemma \ref{lma:boundary}).
It is important to note that the CAV is only guaranteed to satisfy the rear-end safety constraint as long as our modeling assumption of $\ddot{\delta}(t) = 0$ holds.
If the front vehicle applies a positive acceleration, the proposed optimal CBF would be conserviative with respect to the feasible action space.
Similarly, if the front vehicle decelerates, our CBF would no longer guarantee forward-invariance for the safe set. 
Overcoming these challenges requires an explicit model of the front vehicle--either to predict $\ddot{\delta}(t)$ or to to model the worst case scenario where $\ddot{\delta}(t)$ is equal to its lower bound (emergency braking).
However, this is an issue for problem formulation and modeling that is independent of the specific barrier function.

\section{Conclusions} \label{sec:conclusion}

In this article we presented sufficient conditions and a constructive proof for control barrier functions to be optimal, i.e., they maximize the feasible action space of a system while guaranteeing safety.
We proved that the optimal CBF follows a dynamical motion primitive on the boundary of the safe set (Definition \ref{def:line-int}) and implicitly predicts the future trajectories of exogenous variables (Remark \ref{rmk:embedded}).
We demonstrated the performance of the optimal CBF in an adaptive cruise control problem and compared its performance to a linear CBF.

This work suggests several interesting areas for future work.
The most immediate is generalizing the results for higher-order systems, as well as specializing the results to systems with scalar dynamics.
Finding methods to solve the shortest path integral is an interesting area of research, and it presents an opportunity for data-driven and machine learning solutions to generate optimal barrier functions.
Addressing the constraint compatability for a system with multiple optimal CBFs is another area interest.
Finally, analyzing the effect of the stopping behavior and exogenous variables that are embedded in the optimal CBF through the \textit{shortest line integral} is a compelling research direction for large multi-agent systems.

\addtolength{\textheight}{-12cm}   

\bibliographystyle{unsrt}
\bibliography{mendeley,cbfs}

\end{document}